\def\Bbb{\mathbb}
\def\eea{\end{eqnarray*}}
\newtheorem{defn}{Definition}
\newtheorem{thm}{Theorem}[section]
\newtheorem{lem}[thm]{Lemma}
\begin{document}
\renewcommand{\theequation}{\thesection.\arabic{equation}}

\sloppy
\title{On the nonexistence of Einstein metric on $4$-manifolds}

\author{Chanyoung Sung}
\date{\today}

\address{Dept. of Mathematics and Institute for Mathematical Sciences \\
Konkuk University\\
         1 Hwayang-dong, Gwangjin-gu, Seoul, KOREA}
\email{cysung@kias.re.kr}
\thanks{This work was supported by the National Research Foundation(KRF) grant funded by the Korea government(MEST). (No. 2009-0074404)}
\keywords{Einstein metric, Seiberg-Witten theory}
\subjclass[2000]{53C25,57R57, 57M50}

\maketitle

\begin{abstract}
By using the gluing formulae of the Seiberg-Witten invariant, we
show the nonexistence of Einstein metric on manifolds obtained from a $4$-manifold with nontrivial Seiberg-Witten invariant by performing sufficiently many connected sums or appropriate surgeries along circles or homologically trivial 2-spheres with closed oriented $4$-manifolds with negative definite intersection form.
\end{abstract}
%\vfill
%\pagebreak

\setcounter{section}{0}
\setcounter{equation}{0}

\section{Introduction}
A smooth Riemannian manifold $(M,g)$ is called Einstein if it satisfies
$$Ric_g=c g,$$ where $Ric_g$ denotes the Ricci curvature of $g$, and $c$ is a constant.
When the dimension of $M$ is less than $4$, any Einstein manifold is a space form whose classification is well-known. In higher dimensions, it is in general difficult to decide whether a manifold admits an Einstein metric. Unlike the dimension greater than $4$ where no topological obstruction is known, any closed orientable $4$-manifold $M$ admitting an Einstein metric must satisfy the Hitchin-Thorpe inequality \cite{besse, hit, tho}
$$2\chi(M)\geq 3|\tau(M)|$$ with equality held only by a quotient of $K3$ surface or $4$-torus, where $\chi(M)$ and $\tau(M)$ respectively denote the Euler characteristic and the signature of $M$. This well-known inequality is the consequence of the $4$-dimensional Chern-Gauss-Bonnet formula.

Since the $4$-dimensional geometry is complicated by the possible existence of many smooth structures, the condition for the existence of Einstein metric on $4$-manifolds inevitably involve the underlying smooth structure.
It was the Seiberg-Witten theory that has brought a remarkable improvement of the Hitchin-Thorpe condition. LeBrun exploited the curvature estimate coming from the Seiberg-Witten theory to derive that any closed oriented Einstein $4$-manifold $M$ with a monopole class satisfies
$$\chi(M)\geq  3\tau(M)$$ with equality held only by a compact complex hyperbolic 2-space or a flat 4-manifold (\cite{LE}), and

\begin{thm}[LeBrun \cite{LEB}]
Let $M$ be a smooth closed oriented $4$-manifold with a nontrivial Seiberg-Witten invariant.
Then $M\# k\overline{\Bbb CP^2}\# l(S^1\times S^3)$ does not admit Einstein metric if $k+4l> 0$ and $k+4l\geq \frac{1}{3} (2\chi(M)+ 3\tau(M)$).
\end{thm}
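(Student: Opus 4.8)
The plan is to turn the existence of an Einstein metric on $N:=M\# k\overline{\Bbb{CP}^2}\# l(S^1\times S^3)$ into a curvature obstruction, and then to defeat it using a Seiberg-Witten monopole class inherited from $M$. First I would record the Gauss-Bonnet and signature identities for a closed oriented Einstein $4$-manifold; since the trace-free Ricci term vanishes, their combination reads
$$2\chi(N)+3\tau(N)=\frac{1}{4\pi^2}\int_N\Big(\frac{s_g^2}{24}+2|W^+_g|^2\Big)\,dV_g.$$
A routine count using $\chi(\overline{\Bbb{CP}^2})=3$, $\tau(\overline{\Bbb{CP}^2})=-1$, $\chi(S^1\times S^3)=\tau(S^1\times S^3)=0$, together with additivity of $\tau$ and of $\chi-2$ under connected sum, gives the purely topological identity
$$2\chi(N)+3\tau(N)=\big(2\chi(M)+3\tau(M)\big)-(k+4l).$$

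Next I would manufacture a monopole class on $N$ with large self-dual square. Because $M$ has nontrivial Seiberg-Witten invariant there is a spin$^c$ structure $\mathfrak{s}_0$ whose first Chern class $a:=c_1(\mathfrak{s}_0)$ is a monopole class, and nonnegativity of the expected dimension forces $a^2\geq 2\chi(M)+3\tau(M)$. The blow-up formula turns each $\overline{\Bbb{CP}^2}$ summand into the family of monopole classes $a\pm E_1\pm\cdots\pm E_k$, with $E_i$ the exceptional classes, while the gluing formula for the Seiberg-Witten invariant along $S^3$ shows that connected sum with $S^1\times S^3$ preserves the monopole property and, since $H^2(S^1\times S^3)=0$, leaves the self-intersection untouched. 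For a fixed metric $g$ on $N$, averaging $(c_\epsilon^+)^2$ over the $2^k$ sign choices $c_\epsilon=a\pm E_1\pm\cdots\pm E_k$ annihilates the cross terms and yields $\mathrm{avg}_\epsilon(c_\epsilon^+)^2=(a^+)^2+\sum_i(E_i^+)^2\geq(a^+)^2$, so some monopole class $c$ satisfies
$$(c^+)^2\geq(a^+)^2=a^2+\|a^-\|^2\geq a^2\geq 2\chi(M)+3\tau(M).$$

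Now I would invoke the Seiberg-Witten curvature estimates underlying \cite{LEB}: the scalar bound $\int_N s_g^2\,dV_g\geq 32\pi^2(c^+)^2$ together with the Weyl-refined bound $\int_N\big(s_g-\sqrt{6}\,|W^+_g|\big)^2\,dV_g\geq 72\pi^2(c^+)^2$, substituted into the Einstein identity above, produce LeBrun's refined inequality $2\chi(N)+3\tau(N)\geq\frac{2}{3}(c^+)^2$. Chaining the three displays gives $\big(2\chi(M)+3\tau(M)\big)-(k+4l)\geq\frac{2}{3}\big(2\chi(M)+3\tau(M)\big)$, that is $k+4l\leq\frac{1}{3}\big(2\chi(M)+3\tau(M)\big)$. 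Hence no Einstein metric can exist as soon as $k+4l>\frac{1}{3}(2\chi(M)+3\tau(M))$, which settles the strict part of the hypothesis.

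I expect the two genuinely delicate points to be the equality case and the $S^1\times S^3$ gluing. When $k+4l=\frac13(2\chi(M)+3\tau(M))$ every inequality above is saturated, which by LeBrun's rigidity analysis would force $g$ to be a K\"ahler-Einstein metric modelled on a minimal complex surface with $b_1=0$ and canonical class $\pm c$; but a $\overline{\Bbb{CP}^2}$ summand destroys minimality and an $S^1\times S^3$ summand makes $b_1(N)>0$, both incompatible with that rigidity once $k+4l>0$, so the inequality is in fact strict and the contradiction survives. The hardest step, and the one flagged in the abstract, is the rigorous preservation of the monopole class under connected sum with $S^1\times S^3$: since this operation raises $b_1$, the invariant must be read with the $b_1>0$ conventions and controlled through the gluing formula along $S^3$; establishing that the monopole class persists there is the technical heart, the remainder being the curvature estimates and the elementary algebra above.
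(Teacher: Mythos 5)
Your main line of argument is sound and is essentially the argument the paper itself runs (for its generalizations in Section 3, which contain this cited theorem of LeBrun as a special case): produce a monopole class on the connected sum via the blow-up and $S^1\times S^3$ gluing formulae, choose signs on the exceptional classes so that $((c_1)^+)^2\geq c_1^2(\mathfrak{s})\geq 2\chi(M)+3\tau(M)$, feed this into LeBrun's estimate $2\chi+3\tau\geq\frac{2}{3}(c^+)^2$ for Einstein metrics, and compare with the topological identity $2\chi+3\tau\mapsto 2\chi(M)+3\tau(M)-(k+4l)$. Your bookkeeping, constants, and the averaging trick over the $2^k$ sign choices are all correct, and you rightly identify the $S^1\times S^3$ gluing (Theorem 2.2 in the paper, from \cite{sung}) as the technical input there.

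The genuine gap is in your treatment of the equality case $k+4l=\frac{1}{3}(2\chi(M)+3\tau(M))$, where both of your closing assertions fail as stated. First, ``a $\overline{\Bbb CP^2}$ summand destroys minimality'' conflates a smooth connected-sum decomposition with non-minimality of a complex structure; the actual argument is that on a minimal K\"ahler surface of nonnegative Kodaira dimension with $c_1^2>0$ every basic class is numerically $r c_1(K)$ with $|r|\leq 1$, so having \emph{both} $c_1(\mathfrak{s})+E$ and $c_1(\mathfrak{s})-E$ as basic classes forces $E=0$, contradicting $E_i^2=-1$. Second, ``an $S^1\times S^3$ summand makes $b_1>0$, incompatible with that rigidity'' is false: K\"ahler--Einstein surfaces with $b_1>0$ exist in abundance (abelian surfaces, products of curves), so rigidity does not force $b_1=0$. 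The paper instead kills the $4l$ term with Wu's formula $c_1^2=2\chi+3\tau$ for the almost-complex structure produced by the rigidity: comparing $(c_1(\mathfrak{s})+E)^2=c_1^2(\mathfrak{s})-k$ with $2\chi(\tilde M)+3\tau(\tilde M)=2\chi(M)+3\tau(M)-(k+4l)$ and the saturated equality $c_1^2(\mathfrak{s})=2\chi(M)+3\tau(M)$ yields $4l=0$, whence $k+4l=0$, the desired contradiction. There is also an intermediate step you should not elide: LeBrun's rigidity gives only an almost-K\"ahler structure, and passing to K\"ahler requires Armstrong's or Apostolov--Armstrong--Dr\u{a}ghici's theorem before the Enriques--Kodaira classification and the basic-class argument can be invoked.
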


In this article, we generalize this theorem to :
\begin{thm}\label{th1}
Let $M$ be a smooth closed oriented $4$-manifold with a nontrivial Seiberg-Witten invariant and $N$ be a smooth closed oriented $4$-manifold with $b_2^+(N)=0$.
Then $M\# N$ does not admit Einstein metric if $$b_2(N)+4b_1(N)> 0$$ and $$b_2(N)+4b_1(N)\geq \frac{1}{3} (2\chi(M)+ 3\tau(M)).$$
\end{thm}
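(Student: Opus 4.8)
The plan is to argue by contradiction. Suppose $M\#N$ carries an Einstein metric $g$; I will manufacture a monopole class on $M\#N$ out of the nontrivial Seiberg--Witten invariant of $M$, insert it into LeBrun's curvature estimate for Einstein $4$-manifolds, and then compare the resulting lower bound for $2\chi+3\tau$ with the value forced by the connected-sum formulae.

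First I would dispose of the topological bookkeeping. From $\chi(M\#N)=\chi(M)+\chi(N)-2$ and $\tau(M\#N)=\tau(M)+\tau(N)$, together with $\chi(N)=2-2b_1(N)+b_2(N)$ and, since $b_2^+(N)=0$, $\tau(N)=-b_2(N)$, one obtains
$$2\chi(M\#N)+3\tau(M\#N)=\bigl(2\chi(M)+3\tau(M)\bigr)-\bigl(b_2(N)+4b_1(N)\bigr).$$
If the left-hand side is negative the Hitchin--Thorpe inequality already forbids an Einstein metric, so I may assume $2\chi(M)+3\tau(M)>0$; this identity is what converts the hypothesis on $b_2(N)+4b_1(N)$ into a statement about $2\chi+3\tau$.

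The core of the proof is the Seiberg--Witten input, and this is where I expect the main obstacle. Because $b_2^+(N)=0$, the $N$-side contributes no competing self-dual cohomology, so the gluing formulae for the Seiberg--Witten invariant should propagate the nontrivial invariant of $M$ to $M\#N$: a basic class $a_M$ of $M$ gives rise to a family of monopole classes of $M\#N$, obtained by adding characteristic classes supported on $N$ (the $b_2(N)$ directions behaving as in the blow-up formula, the $b_1(N)$ directions requiring the connected-sum gluing that generalizes the $S^1\times S^3$ summands of LeBrun's theorem). Using the fact that LeBrun's estimates hold throughout the convex hull of monopole classes, and that the contributions from $N$ enter symmetrically, the pullback $a$ of $a_M$ lies in this convex hull. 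Since $H^2(M\#N)\cong H^2(M)\oplus H^2(N)$ splits orthogonally for the intersection form, its self-intersection is unchanged, $a^2=a_M^2\geq 2\chi(M)+3\tau(M)$, the last inequality being nonnegativity of the expected dimension of the Seiberg--Witten moduli space. Establishing this gluing for an arbitrary $N$ with $b_1(N)>0$, rather than for the model pieces, is the delicate point.

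With the monopole class in hand, LeBrun's refined Einstein estimate gives
$$2\chi(M\#N)+3\tau(M\#N)\geq \tfrac{2}{3}(a^+)^2\geq \tfrac{2}{3}a^2\geq \tfrac{2}{3}\bigl(2\chi(M)+3\tau(M)\bigr),$$
where $(a^+)^2\geq a^2$ because $a^+$ is the self-dual part of the $g$-harmonic representative. Comparing with the identity above yields $b_2(N)+4b_1(N)\leq \tfrac{1}{3}\bigl(2\chi(M)+3\tau(M)\bigr)$, contradicting the hypothesis $b_2(N)+4b_1(N)\geq \tfrac{1}{3}\bigl(2\chi(M)+3\tau(M)\bigr)$ unless every inequality is an equality. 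Equality in LeBrun's estimate forces $g$ to be K\"ahler--Einstein with $a=\pm c_1$, so that $M\#N$ is a minimal complex surface; but the hypothesis $b_2(N)+4b_1(N)>0$ makes $M\#N$ a genuinely nontrivial connected sum, which carries no such structure. This contradiction rules out the Einstein metric and proves Theorem \ref{th1}.
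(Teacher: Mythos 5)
Your overall strategy --- connected-sum bookkeeping for $2\chi+3\tau$, a monopole class on $M\#N$ inherited from $M$, LeBrun's curvature estimate, and Chern--Gauss--Bonnet --- is the paper's strategy, and your convex-hull device for getting $(a^+)^2\geq a_M^2\geq 2\chi(M)+3\tau(M)$ is a legitimate repackaging of the paper's explicit computation with the two basic classes $c_1(\frak{s})\pm E$ (where $E$, supported on $N$, satisfies $E^2=-b_2(N)$ and cannot be taken to be zero when $b_2(N)>0$, since a diagonalized negative-definite form is odd and admits no zero characteristic vector; this is why the naive chain $a^2=a_M^2$ fails for an actual Spin$^c$ class and the symmetric pair $c_1(\frak{s})\pm E$, or equivalently the convex hull, is needed). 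The existence of the glued basic classes, which you flag as ``the delicate point,'' is supplied in the paper by viewing $M\#N$ as a surgery on $M$ with $(S^1\times S^3)\#N$ along a circle, by a gluing theorem quoted from \cite{sung}, and by a Donaldson-diagonalization lemma producing $\frak{s}'$ on $N$ with $c_1^2(\frak{s}')=-b_2(N)$; you should cite a precise statement here rather than asserting it.

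The genuine gap is your treatment of the borderline case $b_2(N)+4b_1(N)=\frac{1}{3}(2\chi(M)+3\tau(M))$. First, equality in LeBrun's estimate yields only an \emph{almost}-K\"ahler metric whose almost-K\"ahler form is a $W_+$-eigenvector; passing from almost-K\"ahler to K\"ahler requires Armstrong's or Apostolov--Armstrong--Dr\u{a}ghici's rigidity theorems, which you omit (and invoking LeBrun's equality analysis at all requires first checking $(a^+)^2>0$, which is precisely where the hypothesis $b_2(N)+4b_1(N)>0$ enters). Second, and more seriously, your concluding assertion that a ``genuinely nontrivial connected sum'' cannot carry a minimal complex surface structure is exactly the kind of smooth-topology claim that cannot be taken for granted: nothing a priori prevents $M\#N$ from being diffeomorphic to a minimal surface. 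The paper closes this by a Seiberg--Witten argument: a minimal K\"ahler surface of nonnegative Kodaira dimension with non-torsion canonical class has all basic classes numerically equal to $rc_1(K)$ with $|r|\leq 1$, whereas $\pm(c_1(\frak{s})\pm E)$ are all basic classes of $M\#N$, forcing $E=0$ and hence $b_2(N)=0$; Wu's formula then forces the $b_1$-contribution to vanish as well, so $b_2(N)+4b_1(N)=0$, contradicting the strict positivity hypothesis. Without some substitute for this two-step argument your proof does not rule out the equality case.
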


\begin{defn}
Let $M_1$ and $M_2$ be smooth $n$-manifolds and suppose that $k$-spheres $c_1$ and $c_2$ are embedded into $M_1$ and $M_2$ respectively with trivial normal bundle. A {\it surgery} of $M_1$ and $M_2$ along $c_i$'s are defined as the result of deleting tubular neighborhood of each $c_i$ and gluing the remainders by identifying two boundaries $S^k\times S^{n-k-1}$ using a diffeomorphism of $S^k$ and the reflection map of $S^{n-k-1}$.
\end{defn}
Note that the surgery on $M$ with $(S^1\times S^3)\# N$ along a null-homotopic circle in $M$ and a circle representing $[S^1]\times \{\textrm{pt}\}\in H_1(S^1\times S^3,\Bbb Z)$ gives $M\# N$. More generally, we will prove :

\begin{thm}\label{th2}
Let $M$ be a smooth closed oriented $4$-manifold with a nontrivial Seiberg-Witten invariant and $N_i$ be a smooth closed oriented $4$-manifold with $b_2^+(N_i)=0$ and $b_1(N_i)\geq 1$ for $i=1,\cdots, m$. Suppose that $c_i\subset N_i$ is an embedded circle nontrivial in $H_1(N_i,\Bbb R)$ for $i=1,\cdots, m$, and $\tilde{M}$ is a manifold obtained from $M$ by performing a surgery with $\cup_{i=1}^m N_i$ along $\cup_{i=1}^m c_i$.

Then $\tilde{M}$ does not admit Einstein metric if
$$\sum_{i=1}^m(b_2(N_i)+4(b_1(N_i)-1))> 0$$ and
$$\sum_{i=1}^m(b_2(N_i)+4(b_1(N_i)-1))\geq \frac{1}{3} (2\chi(M)+ 3\tau(M)).$$
\end{thm}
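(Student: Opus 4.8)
The plan is to deduce Theorem~\ref{th2} from LeBrun's curvature estimate for Einstein $4$-manifolds carrying a monopole class. Two ingredients are needed: a topological bookkeeping identity for $(2\chi+3\tau)(\tilde M)$, and the persistence of a Seiberg-Witten monopole class under the circle surgery. I begin with the topology. The surgery glues $M\setminus\bigsqcup_{i}(S^1\times D^3)$ to $\bigsqcup_{i}\bigl(N_i\setminus(S^1\times D^3)\bigr)$ along copies of $S^1\times S^2$. Since $\chi(S^1\times D^3)=\chi(S^1\times S^2)=0$, excision gives $\chi(\tilde M)=\chi(M)+\sum_i\chi(N_i)$, while Novikov additivity across the $S^1\times S^2$ necks gives $\tau(\tilde M)=\tau(M)+\sum_i\tau(N_i)$. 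Inserting $\tau(N_i)=-b_2(N_i)$ and $\chi(N_i)=2-2b_1(N_i)+b_2(N_i)$, which follow from $b_2^+(N_i)=0$, a short computation yields
\begin{equation*}
(2\chi+3\tau)(\tilde M)=(2\chi+3\tau)(M)-K,\qquad K:=\sum_{i=1}^m\bigl(b_2(N_i)+4(b_1(N_i)-1)\bigr).
\end{equation*}
Thus $K$ is precisely the quantity in the hypotheses, and the theorem reduces to showing that any Einstein metric would force $(2\chi+3\tau)(\tilde M)\geq\frac{2}{3}(2\chi+3\tau)(M)$.

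The crux is to show that $\tilde M$ inherits a monopole class from $M$. Fix a $\mathrm{spin}^c$ structure $\mathfrak s$ on $M$ with $SW_M(\mathfrak s)\neq 0$ and $c_1(\mathfrak s)^2=2\chi(M)+3\tau(M)$. I would argue, via the gluing formulae for the Seiberg-Witten invariant, that surgering with $N_i$ along a circle $c_i$ that is homologically essential in $N_i$ and satisfies $b_2^+(N_i)=0$ does not destroy the invariant: the piece $N_i\setminus(S^1\times D^3)$ is built from a negative-definite manifold and enters the gluing formula as a unit, so that a nonzero Seiberg-Witten invariant survives on $\tilde M$, just as connected summing with a negative-definite manifold (or with $S^1\times S^3$) preserves monopole classes in LeBrun's work. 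The hypotheses that $c_i$ is nontrivial in $H_1(N_i,\Bbb R)$ and $b_2^+(N_i)=0$ are exactly what rule out reducible solutions along the $S^1\times S^2$ neck and let the gluing proceed. The output I need is that LeBrun's Einstein estimate applies to $\tilde M$ with the class inherited from $\mathfrak s$, contributing the value $c_1(\mathfrak s)^2=(2\chi+3\tau)(M)$. Carrying out this gluing in the circle-surgery setting, rather than the connected-sum setting already available in the literature, is the main obstacle.

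Granting the monopole class, suppose $\tilde M$ admits an Einstein metric $g$. Combining the Gauss-Bonnet and signature formulae (under which, for an Einstein metric, $2\chi+3\tau=\frac{1}{4\pi^2}\int(\frac{s^2}{24}+2|W^+|^2)\,dV$) with LeBrun's scalar-curvature and mixed scalar/self-dual-Weyl Seiberg-Witten estimates gives
\begin{equation*}
(2\chi+3\tau)(\tilde M)\geq\tfrac{2}{3}\,(2\chi+3\tau)(M).
\end{equation*}
Substituting the first-step identity yields $K\leq\frac{1}{3}(2\chi+3\tau)(M)$, contradicting the hypothesis whenever the inequality $K\geq\frac{1}{3}(2\chi+3\tau)(M)$ is strict. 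It remains to exclude the boundary case $K=\frac{1}{3}(2\chi+3\tau)(M)>0$; there equality must hold in LeBrun's estimate, forcing $(\tilde M,g)$ to be K\"ahler-Einstein with monopole class $\pm c_1$ and with $W^+$ of the rigid algebraic form of a K\"ahler metric, which is incompatible with the nontrivial negative-definite and $b_1$ topology contributed by the $N_i$ exactly when $K>0$. This contradiction completes the proof.
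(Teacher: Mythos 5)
Your overall strategy coincides with the paper's: the identity $(2\chi+3\tau)(\tilde M)=(2\chi+3\tau)(M)-K$ is exactly (\ref{aaa}) there, the persistence of a monopole class under the circle surgeries is supplied by Theorem \ref{mylemma} (proved in \cite{sung}, so what you call ``the main obstacle'' is already available and is simply quoted), and the endgame is LeBrun's curvature estimate combined with Chern--Gauss--Bonnet for an Einstein metric. However, two steps that carry real content are missing.

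First, LeBrun's estimate applied to the inherited class gives $\frac{1}{4\pi^2}\int(\frac{s^2}{24}+2|W_+|^2)\,d\mu\geq\frac{2}{3}\bigl((c_1(\frak{s})+E)^+\bigr)^2$, where $E=c_1(\frak{s}')$ is the contribution of the $N_i$'s and $(\cdot)^+$ denotes the $g$-self-dual harmonic part. You assert this ``contributes the value $c_1(\frak{s})^2$'', but $((c_1(\frak{s})+E)^+)^2=(c_1(\frak{s})^+)^2+2c_1(\frak{s})^+\cdot E^+ +(E^+)^2$ and the cross term can be negative, so a single monopole class does not yield the bound $\frac{2}{3}(2\chi+3\tau)(M)$. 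The paper's fix is that \emph{both} $c_1(\frak{s})\pm E$ are basic classes (the characteristic element on a negative-definite $N_i$ can be chosen with either sign), so for at least one sign the cross term is nonnegative and $((c_1(\frak{s})\pm E)^+)^2\geq (c_1(\frak{s})^+)^2\geq c_1^2(\frak{s})\geq 2\chi(M)+3\tau(M)$, the last inequality coming from nonnegativity of the moduli-space dimension (your claimed equality $c_1^2(\frak{s})=2\chi(M)+3\tau(M)$ is not part of the hypotheses). Second, the borderline case $K=\frac{1}{3}(2\chi+3\tau)(M)>0$ is the bulk of the paper's proof and cannot be dismissed as an ``incompatibility'': equality in LeBrun's estimate gives only an almost-K\"ahler metric; one needs Armstrong \cite{arm} or Apostolov--Armstrong--Dr\u{a}ghici \cite{aad} to upgrade to K\"ahler, then the Enriques--Kodaira classification and minimality, then the fact that the basic classes of a minimal K\"ahler surface of nonnegative Kodaira dimension are numerically $rc_1(K)$ with $|r|\leq 1$ to force $E=0$ (hence $b_2(N_i)=0$ for all $i$), and finally Wu's formula to force $b_1(N_i)=1$, whence $K=0$, a contradiction. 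As written, your argument does not rule out the equality case.
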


Most generally, we can also allow surgeries along homologically trivial 2-spheres to give :
\begin{thm}\label{th3}
Let $M$ be a smooth closed oriented $4$-manifold with a nontrivial Seiberg-Witten invariant, and $N_i, \bar{N}_j$ for $i=1,\cdots, m$ and  $j=1,\cdots, n$ be smooth closed oriented $4$-manifolds such that $b_2^+(N_i)=b_2^+(\bar{N}_j)=0$ and $b_1(N_i)\geq 1$. Suppose that $c_i\subset N_i$ for $i=1,\cdots, m$ is an embedded circle nontrivial in $H_1(N_i,\Bbb R)$, and $F_j\subset M$ and $\bar{F}_j\subset \bar{N}_j$ for $j=1,\cdots, n$ are embedded 2-spheres trivial in $H_2(M,\Bbb R)$ and $H_2(\bar{N}_j,\Bbb R)$ respectively.

If $\tilde{M}$ is a manifold obtained from $M$ by performing a surgery with $\cup_{i=1}^m N_i$ along $\cup_{i=1}^m c_i$, and with $\cup_{j=1}^n \bar{N}_j$ along $\cup_{j=1}^n F_j$ and $\cup_{j=1}^n \bar{F}_j$,
then $\tilde{M}$ does not admit Einstein metric if
$$\sum_{i=1}^m(b_2(N_i)+4(b_1(N_i)-1))+\sum_{j=1}^n(b_2(\bar{N}_j)+4(b_1(\bar{N}_j)+1))\geq \frac{1}{3} (2\chi(M)+ 3\tau(M)).$$
The same conclusion also holds when $m=0$, i.e. $\cup_{i=1}^m N_i=\emptyset$.
\end{thm}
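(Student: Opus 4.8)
The plan is to exhibit a monopole class on $\tilde M$ inherited from $M$ and to combine its self-intersection with LeBrun's Seiberg--Witten curvature estimate for Einstein four-manifolds, as in Theorems \ref{th1} and \ref{th2}. Fix a Seiberg--Witten basic class $c\in H^2(M;\mathbb Z)$; nontriviality of the invariant makes the moduli space nonempty for generic data, so its formal dimension $\tfrac14\bigl(c^2-(2\chi(M)+3\tau(M))\bigr)$ is nonnegative and $c^2\geq 2\chi(M)+3\tau(M)$. The crucial first step, carried out below, is to produce a monopole class $\alpha$ on $\tilde M$ with $b_2^+(\tilde M)\geq 1$ and $\alpha^2=c^2$. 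Granting this, let $g$ be any Einstein metric on $\tilde M$ and let $\alpha=\alpha^++\alpha^-$ be the Hodge decomposition of the harmonic representative; since $\alpha^2=(\alpha^+)^2-\|\alpha^-\|^2$ one has $(\alpha^+)^2\geq \alpha^2=c^2\geq 2\chi(M)+3\tau(M)$. LeBrun's refined estimate \cite{LEB} then gives $2\chi(\tilde M)+3\tau(\tilde M)\geq \tfrac{2}{3}(\alpha^+)^2\geq \tfrac{2}{3}\bigl(2\chi(M)+3\tau(M)\bigr)$.

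Next I would compute the characteristic numbers of $\tilde M$. Euler characteristics add over a decomposition glued along three-manifolds of vanishing Euler characteristic; since $\chi(S^1\times S^2)=\chi(S^2\times S^1)=0$, and since removing the tubular neighborhood of an embedded circle leaves $\chi$ unchanged while removing that of an embedded two-sphere lowers it by $2$, one finds $\chi(\tilde M)=\chi(M)+\sum_{i}\chi(N_i)+\sum_{j}\chi(\bar N_j)-4n$. Novikov additivity of the signature over the null-homologous spheres $F_j,\bar F_j$ and the circles $c_i$ gives $\tau(\tilde M)=\tau(M)+\sum_i\tau(N_i)+\sum_j\tau(\bar N_j)$, and $b_2^+(N_i)=b_2^+(\bar N_j)=0$ forces $\tau(N_i)=-b_2(N_i)$ and $\tau(\bar N_j)=-b_2(\bar N_j)$. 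Substituting $\chi=2-2b_1+b_2$ for each summand and simplifying, the left-hand side of the hypothesis equals precisely $\bigl(2\chi(M)+3\tau(M)\bigr)-\bigl(2\chi(\tilde M)+3\tau(\tilde M)\bigr)$; in particular the Euler drop of $4$ per two-sphere surgery is what turns the circle contribution $4(b_1(N_i)-1)$ into $4(b_1(\bar N_j)+1)$. Feeding this identity into the inequality of the previous paragraph shows that an Einstein metric on $\tilde M$ would force the left-hand side of the hypothesis to be at most $\tfrac13\bigl(2\chi(M)+3\tau(M)\bigr)$, contradicting the assumption in its strict range. The boundary case of equality is disposed of by the rigidity in the equality case of LeBrun's estimate together with the strict positivity $\sum_j\bigl(b_2(\bar N_j)+4(b_1(\bar N_j)+1)\bigr)\geq 4n$, exactly as the rigid models ($K3$, complex-hyperbolic and flat quotients) are excluded in Theorems \ref{th1} and \ref{th2}; the case $m=0$ is covered identically once $n\geq 1$.

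The hard part is the first step, the propagation of the monopole class, and within it the two-sphere surgeries are the genuinely new point beyond Theorem \ref{th2}. For the circle surgeries along the homologically essential $c_i$ I would stretch a long neck modelled on the positive-scalar-curvature three-manifold $S^1\times S^2$: the Weitzenb\"ock formula forbids irreducible solutions on the neck, and because $b_2^+(N_i)=0$ the $N_i$-side contributes no irreducible monopole, so every solution localizes on the $M$-side and the invariant of the glued $\mathrm{Spin}^c$ structure extending $c$ equals that of $M$, as in Theorem \ref{th2}. For the two-sphere surgeries the separating three-manifold is $S^2\times S^1$, which also carries positive scalar curvature; the obstacle is to establish the corresponding gluing formula, namely that the relative Seiberg--Witten invariant of $\bar N_j\setminus\nu(\bar F_j)$ is the unit, so that $SW_{\tilde M}(\alpha)=SW_M(c)\neq 0$. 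Here the hypotheses $b_2^+(\bar N_j)=0$ and the homological triviality of $F_j$ and $\bar F_j$ are used decisively: the former kills any irreducible contribution from the $\bar N_j$-side, and the latter guarantees that $c$ extends across the neck to a class $\alpha$ whose added second homology is orthogonal to the $M$-summand, so that $\alpha^2=c^2$. Once this gluing statement is secured the argument closes as above, and I expect its proof to be the main technical content of the theorem.
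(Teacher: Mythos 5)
Your overall strategy (propagate a monopole class through the surgeries, feed its self-dual square into LeBrun's curvature estimate, and compare with the Chern--Gauss--Bonnet computation of $2\chi(\tilde M)+3\tau(\tilde M)$) is the paper's strategy, and your arithmetic identity expressing the hypothesis' left-hand side as $(2\chi(M)+3\tau(M))-(2\chi(\tilde M)+3\tau(\tilde M))$ is exactly what the paper establishes. But there is a genuine gap at the step you yourself flag as crucial: in general there is \emph{no} monopole class $\alpha$ on $\tilde M$ with $\alpha^2=c^2$. The $\mathrm{Spin}^c$ structure on $\tilde M$ is obtained by gluing $\frak{s}$ on $M$ to a $\mathrm{Spin}^c$ structure $\frak{s}'$ on the pieces $N_i,\bar N_j$, and $c_1(\frak{s}')$ is a characteristic vector of a (by Donaldson, diagonalizable) negative definite form, so its square is at most $-b_2$; the best achievable is $c_1^2(\frak{s}')=-b_2$. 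Orthogonality of $E:=c_1(\frak{s}')$ to the $M$-summand therefore gives $\alpha^2=c^2+E^2=c^2-\sum_i b_2(N_i)-\sum_j b_2(\bar N_j)$, not $c^2$, and your bound $(\alpha^+)^2\geq\alpha^2$ then only yields $2\chi(\tilde M)+3\tau(\tilde M)\geq\frac{2}{3}\bigl(2\chi(M)+3\tau(M)-\sum b_2\bigr)$, which falls short by exactly $\frac{2}{3}\sum b_2$. The paper's fix is to use \emph{two} monopole classes $c_1(\frak{s})\pm E$ (both occur, by flipping the signs in the characteristic vector) and to note that $((c_1(\frak{s})\pm E)^+)^2=(c_1(\frak{s})^+)^2\pm 2c_1(\frak{s})^+\cdot E^++(E^+)^2\geq (c_1(\frak{s})^+)^2\geq c_1^2(\frak{s})$ for at least one choice of sign. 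Without this two-class trick your argument proves the theorem only when all $b_2(N_i)=b_2(\bar N_j)=0$.

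Two smaller points. The equality case is not disposed of ``exactly as the rigid models are excluded'': the paper's treatment is substantive --- equality in LeBrun's estimate forces $(\tilde M,g)$ to be almost-K\"ahler, hence K\"ahler by Armstrong and Apostolov--Armstrong--Dr\u{a}ghici, hence a minimal surface of nonnegative Kodaira dimension; the structure of basic classes on such surfaces forces $E=0$, so $b_2(N_i)=b_2(\bar N_j)=0$, and Wu's formula then gives $\sum_i 4(b_1(N_i)-1)+\sum_j 4(b_1(\bar N_j)+1)=0$, contradicting your (correct) observation that the sphere terms contribute at least $4n>0$. Also, your gluing heuristic for the sphere surgeries is in the right spirit, but the paper does not run a new neck-stretching argument along $S^2\times S^1$: it factors each $F_j$--$\bar F_j$ surgery as an Ozsv\'ath--Szab\'o $S^4$-surgery on either side followed by a circle surgery along the core $\gamma$, and the resulting formula requires inserting the extra degree-one classes $[\gamma],[d_1],\dots$ into $\Bbb A(\tilde M)$ to account for the jump in $b_1$; it is not literally $SW_{\tilde M}(\alpha)=SW_M(c)$ on the same element of $\Bbb A$.
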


\section{Computation of Seiberg-Witten invariant}
We will give a brief definition of the Seiberg-Witten invariant. Let $M$ be a smooth oriented Riemannian $4$-manifold and $\frak{s}$ be a Spin$^c$ structure on it.
We assume that $M$ is closed or noncompact with a cylindrical-end metric.
Let $\Bbb A(M)$ be the graded algebra over $\Bbb Z$ defined by
$$\Bbb Z[H_0(M;\Bbb Z)]\otimes \wedge^*H_1(M;\Bbb Z)$$ with
$H_0(M;\Bbb Z)$ grading two and $H_1(M;\Bbb Z)$ grading one.
An element in $\Bbb A(M)$ cannonically gives a cocycle of the Seiberg-Witten moduli space, i.e. the solution space modulo gauge transformations of the Seiberg-Witten equations of $(M,\frak{s})$. Thus the evaluation on the fundamental cycle of the moduli space is the \emph{Seiberg-Witten invariant} as a function
$$SW_{M,\frak{s}}:\Bbb A(M)\rightarrow \Bbb Z.$$ When $b_2^+(M)>1$, this is independent of a Riemannian metric and a perturbation term, thus giving a topological invariant. (If $b_2^+(M)=1$, it may depend on the chamber.) The first Chern class of a Spin$^c$ structure on $M$ whose Seiberg-Witten invariant is nontrivial is called a \emph{basic class} of $M$. For more details on the Seiberg-Witten invariant, the readers are referred to \cite{morgan, OS, sung}.

We will need the following gluing formulae of the Seiberg-Witten invariant.
\begin{lem}
Let $N$ be a closed oriented smooth $4$-manifold with negative-definite intersection form $Q$. Then there exists a Spin$^c$ structure $\frak{s}'$ on $N$ satisfying $c_{1}^{2}(\frak{s}')=-b_2(N)$.
\end{lem}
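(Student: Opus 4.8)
The plan is to reduce the statement to elementary lattice theory by means of Donaldson's diagonalization theorem, then to exhibit an explicit characteristic vector and realize it as the first Chern class of a Spin$^c$ structure. The two standard inputs I would invoke are: (i) by Poincar\'e duality the intersection form $Q$ of the closed oriented $4$-manifold $N$ is unimodular, and since $N$ is \emph{smooth} and $Q$ is negative-definite, Donaldson's theorem supplies a basis $e_1,\dots,e_{b_2(N)}$ of the free part of $H^2(N;\Bbb Z)$ in which $Q=\mathrm{diag}(-1,\dots,-1)$; and (ii) the classical fact (Hirzebruch--Hopf, via the Wu formula $v_2=w_2$) that on a closed oriented $4$-manifold a class $c\in H^2(N;\Bbb Z)$ arises as $c_1(\frak{s})$ for some Spin$^c$ structure $\frak{s}$ if and only if $c$ is \emph{characteristic}, i.e. $Q(c,x)\equiv Q(x,x)\pmod 2$ for all $x$ (equivalently, $c$ reduces to $w_2(N)$ mod $2$). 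Since Spin$^c$ structures form a torsor over $H^2(N;\Bbb Z)$ on which $c_1$ shifts by twice the acting class, the set of realized $c_1$'s is exactly this single coset of characteristic vectors.

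Next I would write down the candidate $c=e_1+\cdots+e_{b_2(N)}$ and check that it is characteristic: for $x=\sum a_i e_i$ one computes $Q(c,x)=-\sum a_i$ and $Q(x,x)=-\sum a_i^2$, which agree mod $2$. By the realization statement above there is then a Spin$^c$ structure $\frak{s}'$ with $c_1(\frak{s}')=c$, and its self-intersection is $c_1^2(\frak{s}')=Q(c,c)=\sum_i Q(e_i,e_i)=-b_2(N)$, the torsion part of $c_1$ contributing nothing to the cup-square pairing. This completes the argument.

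The only substantive ingredient is Donaldson's theorem; everything after it is bookkeeping, and this is precisely the step I expect to carry the weight of the proof. Its role is to replace the arbitrary negative-definite unimodular form $Q$ by the standard diagonal one, for which the characteristic vector of norm exactly $-b_2(N)$ is transparent. Were one to drop smoothness, one would instead have to produce, by purely lattice-theoretic means, a characteristic vector of the prescribed norm for an arbitrary negative-definite unimodular form, which is considerably more delicate; this is exactly why the hypothesis that $N$ be smooth is used here.
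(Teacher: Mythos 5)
Your proof is correct and takes essentially the same approach as the paper: apply Donaldson's diagonalization theorem and take the sum of the diagonalizing basis vectors as a characteristic element of square $-b_2(N)$. You additionally make explicit the Hirzebruch--Hopf fact that every characteristic class is realized as $c_1$ of some Spin$^c$ structure, which the paper leaves implicit.
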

\begin{proof}
By the Donaldson's theorem, $Q$ is diagonalizable. (The original Donaldson's theorem \cite{donal} is stated for the simply-connected case, but a simple application of the Mayer-Vietoris argument gives this generalization.) Let $\{\alpha_1,\cdots,\alpha_{b_2(N)}\}$ be a basis of $H^2(N,\Bbb Z)\otimes \Bbb Q$ diagonalizing $Q$.

We have to show that there exists an element $x\in H^2(N,\Bbb Z)$ such that $Q(x,x)=-b_2(N)$, and $x$ is characteristic, i.e. $Q(x,\alpha)\equiv Q(\alpha,\alpha)$ mod 2 for any $\alpha\in H^2(N,\Bbb Z)$. This is done by taking $x=\sum_{i=1}^{b_2(N)}\pm\alpha_i$.
\end{proof}

\begin{thm}\label{mylemma}
Let $M$ and $N$ be smooth closed oriented $4$-manifolds such that
$b_2^+(M)>0$, $b_2^+(N)=0$, and $b_1(N)\geq 1$. Let $c\subset N$ be an embedded circle nontrivial in $H_1(N,\Bbb R)$ and $\tilde{M}$ be the manifold obtained by performing a surgery on $M$ with $N$ along $c$.

If $\tilde{\frak{s}}$ is the Spin$^c$
structure on $\tilde{M}$ obtained by gluing a Spin$^c$ structure $\frak{s}$ on $M$ and a Spin$^c$ structure $\frak{s}'$ on $N$ satisfying $c_{1}^{2}(\frak{s}')=-b_2(N)$,
then
$$SW_{\tilde{M},\tilde{\frak{s}}}(a\cdot [d_1]\cdots [d_{b_1(N)-1}])=\pm SW_{M,\frak{s}}(a)$$ for $a\in \Bbb A(M)$, where $[d_1],\cdots, [d_{b_1(N)-1}]$ along with $r[c]$ for some $r\in \Bbb Q$ form a basis for the non-torsion part of  $H_1(N,\Bbb Z)$.
\end{thm}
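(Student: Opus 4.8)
The plan is to compute the left-hand side by a neck-stretching argument combined with a product formula for the natural splitting of $\tilde M$ produced by the surgery. By construction $\tilde M = M_0 \cup_Y N_0$, where $M_0$ and $N_0$ are the complements in $M$ and $N$ of the deleted tubular neighborhoods of the two surgery circles, and the separating hypersurface is $Y = S^1\times S^2$. The key geometric input is that $Y$ carries a product metric of positive scalar curvature: after inserting a long neck $[-T,T]\times Y$ and letting $T\to\infty$, the Weitzenb\"ock formula forces the spinor to vanish along the neck, so every solution on $\tilde M$ breaks into a pair of finite-energy solutions on the cylindrical-end manifolds $M_0$ and $N_0$ whose asymptotic limits agree with a flat reducible configuration on $Y$. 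Since $\mathrm{Hom}(\pi_1(S^1\times S^2),U(1))\cong U(1)$ is a single circle of flat connections and positive scalar curvature excludes irreducibles on the neck, the sum over intermediate Floer states that would appear in a general gluing formula reduces to the reducible locus, turning the formula into a clean product.

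Before assembling the product I would check the dimension bookkeeping, which dictates the precise number of homology insertions. Since $b_2^+(N)=0$ gives $\tau(N)=-b_2(N)$ and $\chi(N)=2-2b_1(N)+b_2(N)$, the expected dimension of the Seiberg-Witten moduli space of $(N,\mathfrak{s}')$ is
\[
\frac14\Bigl(c_1^2(\mathfrak{s}') - \bigl(2\chi(N)+3\tau(N)\bigr)\Bigr) = \frac14\Bigl(-b_2(N) - \bigl(4-4b_1(N)-b_2(N)\bigr)\Bigr) = b_1(N)-1,
\]
which equals the degree of $[d_1]\cdots[d_{b_1(N)-1}]$ in $\mathbb A(N)$, each one-dimensional class contributing grading one. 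Thus the $N$-side genuinely contributes a number, and the gluing formula takes the schematic shape
\[
SW_{\tilde M,\tilde{\mathfrak{s}}}\bigl(a\cdot[d_1]\cdots[d_{b_1(N)-1}]\bigr) = \pm\,SW_{M,\mathfrak{s}}(a)\cdot \mathcal R_{N_0}\bigl([d_1]\cdots[d_{b_1(N)-1}]\bigr),
\]
with $\mathcal R_{N_0}$ the relative invariant of $N_0$. The factor $a$ and the insertions $[d_i]$ separate onto the two sides because $a\in\mathbb A(M)$ is carried by $M_0$, whose relative invariant reproduces the closed invariant $SW_{M,\mathfrak{s}}$ (refilling $M_0$ with the standard solid torus $S^1\times D^3$ reconstitutes $(M,\mathfrak{s})$ and that piece contributes trivially), whereas $[d_1],\dots,[d_{b_1(N)-1}]$ come from $H_1(N_0)$; the remaining generator $r[c]$ is precisely the class carried by the neck and is consumed in the asymptotic matching.

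The crux, and the step I expect to be the main obstacle, is to show $\mathcal R_{N_0}\bigl([d_1]\cdots[d_{b_1(N)-1}]\bigr)=\pm1$. Here I would use that $N$ is negative definite and that $\mathfrak{s}'$ realizes the extremal square $c_1^2(\mathfrak{s}')=-b_2(N)$ supplied by the preceding lemma. For such an extremal Spin$^c$ structure the curvature identity leaves no room for irreducible solutions, so on $N_0$ the moduli space is carved out of the reducible locus, namely the Picard torus $\mathrm{Pic}(N)\cong T^{b_1(N)}$ of flat $U(1)$-connections carrying $c_1(\mathfrak{s}')$; fixing the asymptotic holonomy along $c$ to match the neck slices this down to a torus of dimension $b_1(N)-1$, and pairing its fundamental class against the $\mu$-classes dual to $[d_1],\dots,[d_{b_1(N)-1}]$ yields $\pm1$. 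The extreme cases are reassuring: $b_1(N)=1,\ b_2(N)=0$ (so $N=S^1\times S^3$, with no insertions) recovers the classical $S^1\times S^3$ summand formula, while $b_1(N)=0,\ N=\overline{\mathbb{CP}^2}$ recovers the blow-up formula, and the theorem runs both phenomena simultaneously. The genuine difficulty is that an arbitrary negative definite $N$ with $b_1(N)\geq1$ need not be diffeomorphic to a connected sum of these models, so one cannot merely iterate the known formulas; the sign and, above all, the transversality and non-degeneracy of the reducible moduli space at the extremal class must be extracted directly from the gluing analysis, and that is where I would concentrate the technical effort.
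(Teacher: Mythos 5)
The paper itself contains no argument for this theorem: its proof is the single line ``See \cite{sung}'', so the only meaningful comparison is with the strategy of that reference (which in turn builds on Ozsv\'ath--Szab\'o \cite{OS}). Your outline follows that strategy: split $\tilde{M}=M_0\cup_Y N_0$ along the positive-scalar-curvature hypersurface $Y=S^1\times S^2$, stretch the neck so that solutions degenerate to pairs of finite-energy solutions asymptotic to reducibles on $Y$, and turn the count into a product of relative invariants. Your dimension count $d(N,\frak{s}')=b_1(N)-1$ is correct and correctly explains why exactly $b_1(N)-1$ one-dimensional insertions $[d_1]\cdots[d_{b_1(N)-1}]$ appear. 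So the skeleton is the right one and is consistent with the cited source.

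As a proof, however, the proposal has a genuine gap exactly where you place it, and one supporting assertion is not merely unproved but unjustifiable as stated. The claim that ``the curvature identity leaves no room for irreducible solutions'' on the negative-definite side does not follow from anything available: when $b_2^+(N)=0$ the self-dual part of the harmonic representative of $c_1(\frak{s}')$ vanishes, so the usual Weitzenb\"ock estimate is vacuous; $N$ need not carry positive scalar curvature; and irreducibles on a $b_2^+=0$ manifold are a chamber-dependent phenomenon that cannot be excluded pointwise. Ruling out (or absorbing) irreducible contributions on the cylindrical-end $N_0$, establishing transversality of the $(b_1(N)-1)$-dimensional reducible torus after fixing the asymptotic holonomy along $c$, and evaluating the $\mu$-classes there to get $\pm 1$ is the entire analytic content of the theorem; your proposal presents this as the ``schematic shape'' of the answer and explicitly defers the work. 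What you have is the statement of the gluing formula together with sensible consistency checks (the blow-up formula and the $S^1\times S^3$ formula as special cases), not a proof; closing it requires the relative Seiberg--Witten/Floer gluing package of \cite{OS} and \cite{sung}, which is precisely what the paper outsources.
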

\begin{proof}
See \cite{sung}.
\end{proof}

\begin{thm}[Ozsv\'ath and Szab\'o \cite{OS}]\label{oz}
Let $M$ be a smooth closed oriented 4-manifold with $b_2^+(M)>0$. Suppose that $F\subset M$ is an embedded 2-sphere trivial in $H_2(M,\Bbb R)$, and $\tilde{M}$ is the manifold obtained by performing a surgery on $M$ with $S^4$ along $F$.

Then for each Spin$^c$ structure $\frak{s}$ on $M$, the induced Spin$^c$ structure $\tilde{\frak{s}}$ on $\tilde{M}$ satisfies
%$$\frak{s}|_{M-F}=\frak{s}'|_{M-F}$$
$$SW_{\tilde{M},\tilde{\frak{s}}}(a\cdot [\gamma])=\pm SW_{M,\frak{s}}(a)$$ for $a\in \Bbb A(M)$, where $\gamma$ is the core of surgery, i.e. a circle $\{pt\}\times D^2$ in a small tubular neighborhood $F\times D^2$ of $F$.
\end{thm}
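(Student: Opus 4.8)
The plan is to realize the surgery as a cut-and-paste along a copy of $Y=S^2\times S^1$ and then to apply the gluing formula for the Seiberg-Witten invariant. First I would make the topology explicit. Since $F$ is trivial in $H_2(M,\Bbb R)$ its normal bundle is trivial, so a tubular neighborhood is $\nu F=S^2\times D^2$ with $\partial(\nu F)=S^2\times S^1$. Writing $X=M\setminus\textrm{int}(\nu F)$, we have $M=X\cup_Y(S^2\times D^2)$, while deleting the standard unknotted $2$-sphere from $S^4$ leaves $D^3\times S^1$, so the surgered manifold is $\tilde M=X\cup_Y(D^3\times S^1)$. Thus $M$ and $\tilde M$ share the piece $X$ and differ only in how the neck $Y$ is capped off, and under the gluing the meridian of $F$ becomes the core circle $\gamma$. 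A Mayer-Vietoris computation gives $\chi(\tilde M)=\chi(M)-2$, $\tau(\tilde M)=\tau(M)$ and $b_1(\tilde M)=b_1(M)+1$, the new class being represented by $[\gamma]$; moreover, $F$ being null-homologous, the intersection form and hence $c_1^2(\tilde{\mathfrak s})=c_1^2(\mathfrak s)$ are unchanged. Feeding this into the index formula gives $\dim\mathcal M_{\tilde M,\tilde{\mathfrak s}}=\dim\mathcal M_{M,\mathfrak s}+1$, which already explains why the invariant to compare is the one cut down by the degree-one class $[\gamma]$.

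With the topology fixed I would give $Y=S^2\times S^1$ the product of a round metric with a circle and insert a long cylindrical neck $Y\times[-T,T]$ into both $M$ and $\tilde M$. The gluing theorem then expresses each closed invariant as a pairing, in the Seiberg-Witten-Floer homology $HF(Y,\mathfrak t)$, of the relative invariant $\Psi_X$ of the common piece against the relative invariants $\Psi_{S^2\times D^2}$ and $\Psi_{D^3\times S^1}$ of the two caps, where $\mathfrak t=\tilde{\mathfrak s}|_Y=\mathfrak s|_Y$. A key point is that this restriction is the torsion Spin$^c$ structure: it must extend over $D^3\times S^1$, and $H^2(D^3\times S^1)=0$ forces $c_1(\mathfrak t)=0$. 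The round product metric on $Y$ has positive scalar curvature, so on the ends there are no irreducible finite-energy solutions and the Floer chain groups are generated by the reducibles lying over the circle of flat connections; I would compute $HF(S^2\times S^1,\mathfrak t)$ explicitly in this torsion case and locate the two cap contributions inside it.

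The heart of the argument is to show that the two pairings agree once $[\gamma]$ is inserted. Because capping with $D^3\times S^1$ instead of $S^2\times D^2$ raises the index by exactly one, the relative invariant $\Psi_{D^3\times S^1}$ sits one grading above $\Psi_{S^2\times D^2}$; evaluating against the one-dimensional cocycle dual to the core $\gamma$, that is, multiplying by $[\gamma]\in\Bbb A(\tilde M)$, lowers the grading back and matches the reducible contributions in $HF(Y,\mathfrak t)$. Carrying this out yields $SW_{\tilde M,\tilde{\mathfrak s}}(a\cdot[\gamma])=\pm SW_{M,\mathfrak s}(a)$ for every $a\in\Bbb A(M)$, the sign reflecting the choice of homology orientations.

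The main obstacle is the analysis on the neck $S^2\times S^1$ in the torsion Spin$^c$ structure, where the equations are dominated by reducibles: one must justify the gluing formula in the presence of a whole circle of reducible solutions, align the spectral-flow and grading conventions so that the index-one shift is matched precisely by the $[\gamma]$ insertion, and determine $HF(S^2\times S^1,\mathfrak t)$ together with the classes $\Psi_{S^2\times D^2}$ and $\Psi_{D^3\times S^1}$. Tracking orientations through the gluing is what produces the undetermined sign, while the inequality $b_2^+(X)>0$, inherited from $b_2^+(M)>0$ because $F$ is null-homologous, is what rules out the problematic reducible wall-crossing on the $X$ side and makes $\Psi_X$ well defined.
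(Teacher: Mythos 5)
The first thing to say is that the paper does not prove this statement at all: Theorem~\ref{oz} is quoted from Ozsv\'ath--Szab\'o \cite{OS} and used as a black box, so the only comparison available is between your outline and the argument in \cite{OS}. Your topological bookkeeping is correct and does match their setup: writing $M=X\cup_Y(S^2\times D^2)$ and $\tilde M=X\cup_Y(D^3\times S^1)$ with $Y=S^2\times S^1$, the restriction of $\mathfrak{s}$ to $Y$ is torsion because $[F]=0$ in $H_2(M,\Bbb R)$, one has $\chi(\tilde M)=\chi(M)-2$ with $\tau$ and $c_1^2$ unchanged, so the formal dimension of the moduli space rises by exactly one, which is why the degree-one class $[\gamma]$ must be inserted. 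Stretching the neck along the positive-scalar-curvature $S^2\times S^1$ is indeed the mechanism in \cite{OS}.

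The gap is that essentially all of the mathematical content of the theorem is concentrated in the one step you defer. You assert that the relative invariant of the $D^3\times S^1$ cap ``sits one grading above'' that of the $S^2\times D^2$ cap and that ``multiplying by $[\gamma]$ lowers the grading back and matches the reducible contributions,'' but that \emph{is} the statement to be proven: nothing in the outline explains why pairing the $\mu$-class of $\gamma$ against the extra circle of reducibles on the $D^3\times S^1$ end (the Jacobian $H^1(Y;\Bbb R)/H^1(Y;\Bbb Z)\cong S^1$) evaluates to $\pm1$ times the contribution of the other cap, rather than to zero or to some other multiple. Likewise, the gluing formula ``in the presence of a whole circle of reducible solutions'' is exactly the non-routine analytic input of \cite{OS}, and you name it as the main obstacle without resolving it; the grading conventions, the computation of the Floer groups of $S^2\times S^1$ in the torsion Spin$^c$ structure, and the identification of the two cap classes are all left as intentions. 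A smaller issue: the hypothesis allows $b_2^+(M)=1$, in which case the invariants are chamber-dependent, and your appeal to $b_2^+(X)>0$ to rule out reducibles on the $X$ side would need to be phrased in terms of a fixed chamber. In short, the proposal correctly reconstructs the strategy of the cited proof and gets the topology right, but as written it is a plan whose decisive steps are asserted rather than carried out.
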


Generalizing this, we prove :
\begin{thm}\label{newlemma}
Let $M$ and $N$ be smooth closed oriented $4$-manifolds such that
$b_2^+(M)>0$, and $b_2^+(N)=0$.
Suppose that $F\subset M$ and $\bar{F}\subset N$ are embedded 2-spheres trivial in $H_2(M,\Bbb R)$ and $H_2(N,\Bbb R)$ respectively, and $\tilde{M}$ is the manifold obtained by performing a surgery on $M$ with $N$ along $F$ and $\bar{F}$.

If $\tilde{\frak{s}}$ is the Spin$^c$
structure on $\tilde{M}$ obtained by gluing a Spin$^c$ structure $\frak{s}$ on $M$ and a Spin$^c$ structure $\frak{s}'$ on $N$ satisfying $c_{1}^{2}(\frak{s}')=-b_2(N)$,
then
$$SW_{\tilde{M},\tilde{\frak{s}}}(a\cdot [\gamma]\cdot [d_1]\cdots [d_{b_1(N)}])=\pm SW_{M,\frak{s}}(a)$$ for $a\in \Bbb A(M)$, where $\gamma$ is a circle $\{pt\}\times D^2$ in a small tubular neighborhood $F\times D^2$ of $F$, and $[d_1],\cdots, [d_{b_1(N)}]$  form a basis for the non-torsion part of  $H_1(N,\Bbb Z)$.
\end{thm}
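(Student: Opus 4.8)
The plan is to realize the $2$-sphere surgery as a composition of two operations already analyzed above: the Ozsv\'ath--Szab\'o surgery of Theorem \ref{oz} and the circle surgery of Theorem \ref{mylemma}. Write $M_0=M\setminus\nu(F)$ and $N_0=N\setminus\nu(\bar F)$, each with boundary $S^2\times S^1$, so that $\tilde M=M_0\cup N_0$. First I would form the intermediate manifold $\hat M$ obtained by performing the surgery of Theorem \ref{oz} on $M$ with $S^4$ along $F$ and an unknotted $2$-sphere. Since that unknotted sphere has complement $D^3\times S^1$ in $S^4$, one has $\hat M=M_0\cup(D^3\times S^1)$, i.e. $\hat M$ is $M$ with $\nu(F)=S^2\times D^2$ replaced by $D^3\times S^1$, and $\gamma$ is the core circle $\{0\}\times S^1$. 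Theorem \ref{oz} then gives $SW_{\hat M,\hat{\frak s}}(a\cdot[\gamma])=\pm SW_{M,\frak s}(a)$.

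Next I would set $N'$ to be the manifold obtained from $N$ by the same replacement along $\bar F$, i.e. the surgery of $N$ on the null-homologous sphere $\bar F$, with core circle $c'$; then $N'\setminus\nu(c')=N_0$ while $\hat M\setminus\nu(\gamma)=M_0$, so the circle surgery of $\hat M$ with $N'$ along $\gamma$ and $c'$ reproduces $M_0\cup N_0=\tilde M$. The homological bookkeeping is the crux: because $\bar F$ is trivial in $H_2(N;\Bbb R)$, passing to $N'$ leaves the intersection form unchanged, so $b_2^+(N')=b_2^+(N)=0$ and $b_2(N')=b_2(N)$, while it adjoins exactly the non-torsion class $[c']$, giving $b_1(N')=b_1(N)+1\geq 1$ and a basis $\{[c'],[d_1],\dots,[d_{b_1(N)}]\}$ of the non-torsion part of $H_1(N')$ with $\{[d_i]\}$ a basis for that of $H_1(N)$. (The model case $N=S^4$, $N'=S^1\times S^3$ already exhibits all of this.) In particular the Spin$^c$ structure $\frak s'$ with $c_1^2(\frak s')=-b_2(N)=-b_2(N')$ used to glue $\tilde{\frak s}$ is simultaneously the one required on $N'$, and its existence is guaranteed by the Lemma above.

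I would then apply Theorem \ref{mylemma} with $(\hat M,N')$ in place of $(M,N)$: since $b_2^+(\hat M)=b_2^+(M)>0$, $b_2^+(N')=0$, and $b_1(N')\geq 1$, it yields
$$SW_{\tilde M,\tilde{\frak s}}(a'\cdot[d_1]\cdots[d_{b_1(N')-1}])=\pm SW_{\hat M,\hat{\frak s}}(a')$$
for $a'\in\Bbb A(\hat M)$, where $b_1(N')-1=b_1(N)$. Taking $a'=a\cdot[\gamma]$ and feeding in the output of Theorem \ref{oz} gives
$$SW_{\tilde M,\tilde{\frak s}}(a\cdot[\gamma]\cdot[d_1]\cdots[d_{b_1(N)}])=\pm SW_{\hat M,\hat{\frak s}}(a\cdot[\gamma])=\pm SW_{M,\frak s}(a),$$
which is the desired identity.

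The main obstacle I anticipate is topological rather than gauge-theoretic: verifying that the composite of the two surgeries is genuinely diffeomorphic to $\tilde M$ as defined, since the $2$-sphere surgery glues $M_0$ to $N_0$ by reflecting the $S^1$ factor of $S^2\times S^1$ whereas the circle surgery reflects the $S^2$ factor. One must check that these two gluings produce the same manifold up to an orientation-compatible diffeomorphism (any discrepancy being absorbed into the sign $\pm$), and that a parallel push-off of $\gamma$ indeed survives as the claimed class in $H_1(\tilde M)$ after $\nu(\gamma)$ is excised. Once this identification and the homological computations above are in place, all the Seiberg-Witten content is carried by Theorems \ref{oz} and \ref{mylemma}.
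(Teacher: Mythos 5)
Your proposal is correct and follows essentially the same route as the paper: the paper also factors the sphere surgery as the Ozsv\'ath--Szab\'o surgery on $M$ with $S^4$ along $F$ (producing $M'=M_0\cup(D^3\times S^1)$) followed by the circle surgery of Theorem \ref{mylemma} on $M'$ with $N'$ along $\gamma$, and it records the same homological facts ($b_2(N')=b_2(N)$, $b_1(N')=b_1(N)+1$ with the new factor generated by the core circle) in an auxiliary lemma before chaining the two $SW$ identities exactly as you do. The only difference is presentational: the paper proves the Betti-number bookkeeping via an explicit Poincar\'e duality / Mayer--Vietoris argument, while you state it and flag the gluing-map compatibility, which the paper passes over silently.
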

\begin{proof}
Perform a surgery on $M$ with  $S^4$ along $F$ to obtain $M'$. In the same way, we get $N'$.  The surgery on $M'$ with $N'$ along the circle $\gamma$ gives $\tilde{M}$.

\begin{lem}
Let $\hat{M}$ be the manifold obtained from $M$ by deleting a small tubular neighborhood of  $F$. Then
$$H_1(M',\Bbb R)\simeq H_1(\hat{M},\Bbb R)\simeq H_1(M,\Bbb R)\oplus \Bbb R,$$ and $$H_2(M',\Bbb R)\simeq H_2(\hat{M},\Bbb R)\simeq H_2(M,\Bbb R),$$ where the additional $\Bbb R$-factor is generated by $[\gamma]$, and the isomorphisms are induced by the obvious inclusions. Likewise for $N'$.
\end{lem}
\begin{proof}
Obviously $H_1(M',\Bbb R)\simeq H_1(\hat{M},\Bbb R)$, because $\pi_1(M')\simeq \pi_1(\hat{M})$ by the Seifert-Van Kampen theorem.  To see $H_1(\hat{M},\Bbb R)\simeq H_1(M,\Bbb R)\oplus  \Bbb R,$ it is enough to show that
$i_*$ in the following commutative diagram of exact sequences is injective.
\[
\xymatrix{
H_{2}(\hat{M},\partial \hat{M})\ar[r]^{\partial_{*}}\ar[d]_{PD} & H_{1}(\partial \hat{M})
\ar[r]^{i_{*}}\ar[d]^{PD} & H_{1}(\hat{M})\ar[d]^{PD}\\
H^{2}(\hat{M})\ar[r]^{i^{*}} & H^{2}(\partial \hat{M})\ar[r]^{\partial^{*}} &
H^{3}(\hat{M},\partial \hat{M}).}
\]
Suppose not. Then $i^*$ in the above diagram is surjective. This means that there exists a nonzero element in $H^2(M)$, which is dual to $[F]$, yielding a contradiction. This also means that $[F]$ is zero in $H_2(\hat{M},\Bbb R)$, which will be used just below.

The fact  $H_2(\hat{M},\Bbb R)\simeq H_2(M,\Bbb R)$ follows from the exact sequence
$$H_2(\partial \hat{M})\stackrel{i_*}\rightarrow H_2(\hat{M})\oplus H_2(S^2\times D^2)\stackrel{\varphi}{\rightarrow} H_2(M)\rightarrow 0,$$ and similarly the fact $H_2(\hat{M},\Bbb R)\simeq H_2(M',\Bbb R)$ follows from the exact sequence
$$H_2(\partial \hat{M})\stackrel{i_*}\rightarrow H_2(\hat{M})\oplus H_2(D^3\times S^1)\stackrel{\varphi}{\rightarrow} H_2(M')\rightarrow 0,$$ where the sequences end with 0, because $i_* : H_{1}(\partial \hat{M})\rightarrow H_{1}( \hat{M})$ is injective.
\end{proof}

Note that $\frak{s}$ and $\frak{s}'$  restrict to be trivial on $F$ and $\bar{F}$ respectively. Thus we abuse the notation to let $\frak{s}$ and $\frak{s}'$ be the induced Spin$^c$ structures on $M'$ and $N'$ respectively.
By theorem \ref{oz},
%Ozsv\'ath and Szab\'o \cite{OS},
$$SW_{M',\frak{s}}(a\cdot [\gamma])=\pm SW_{M,\frak{s}}(a)$$ for $a\in \Bbb A(M)$. Applying  theorem \ref{mylemma},
$$SW_{M',\frak{s}}(a\cdot [\gamma])=\pm SW_{\tilde{M},\tilde{\frak{s}}}(a\cdot[\gamma]\cdot [d_1]\cdots [d_{b_1(N)}])$$ for $a\in \Bbb A(M)$.
\end{proof}

\section{Proof of Theorem  \ref{th2}}

\setcounter{equation}{0}

We need to have a basic class on $\tilde{M}$.
Let $\frak{s}$ be the Spin$^c$ structure on $M$ with a nontrivial Seiberg-Witten invariant. Applying theorem \ref{mylemma} successively, $\tilde{M}$ has nontrivial Seiberg-Witten invariant for $\tilde{\frak{s}}$. Write $c_1(\tilde{\frak{s}})$  as $c_1(\frak{s})+E$ where $E=c_1(\frak{s}')$ coming from $\cup_{i=1}^m N_i$.

Then the proof proceeds in a similar way to \cite{LEB}. First,
\begin{eqnarray*}
\chi(\tilde{M})&=&\chi(M)+\sum_{i=1}^m\chi(N_i)\\ &=& \chi(M)+\sum_{i=1}^m(2-2b_1(N_i)+b_2(N_i)),
\end{eqnarray*}
 and  $$H_2(\tilde{M},\Bbb Z)\simeq H_2(M,\Bbb Z)\oplus (\oplus_{i=1}^m H_2(N_i,\Bbb Z))$$ by a simple Mayer-Vietoris argument. (Here, we use the fact that $c_i$'s are all non-torsion.) Thus
\begin{eqnarray} \label{aaa}
2\chi(\tilde{M})+ 3\tau(\tilde{M})&=& 2\chi(M)+ 3\tau(M) \\ & & -\sum_{i=1}^m(b_2(N_i)+4(b_1(N_i)-1)).\nonumber
\end{eqnarray}
% by which we may assume that $2\chi(M)+3\tau(M)>0$, since otherwise cases are covered by %the Hitchin-Thorpe inequality.
\begin{lem}
Any Riemannian metric $g$ on $\tilde{M}$ satisfies
$$\frac{1}{4\pi^2}\int_{\tilde{M}}(\frac{s_{g}^2}{24}+2|W_+|_{g}^2)\
d\mu_{g}\geq \frac{2}{3}(2\chi(M)+3\tau(M)).$$
\end{lem}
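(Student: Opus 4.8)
The plan is to deduce the inequality from LeBrun's Seiberg--Witten curvature estimate applied not to one, but to a whole family of monopole classes on $\tilde{M}$. Recall LeBrun's estimate \cite{LEB}: if $c\in H^2(\tilde{M},\Bbb R)$ is a monopole class whose $g$-self-dual part $c^+$ is nonzero, then
$$\frac{1}{4\pi^2}\int_{\tilde{M}}\Big(\frac{s_g^2}{24}+2|W_+|_g^2\Big)\,d\mu_g\geq \frac{2}{3}(c^+)^2,$$
where $(c^+)^2=\|c^+\|_{L^2}^2\geq 0$. Thus it suffices to exhibit a single monopole class $c$ on $\tilde{M}$ with $(c^+)^2\geq 2\chi(M)+3\tau(M)$; we may assume $2\chi(M)+3\tau(M)>0$, since otherwise the claimed bound is immediate from non-negativity of the integrand.

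First I would assemble the family of monopole classes. For every sign vector $\epsilon$, the lemma at the start of Section 2 produces on each $N_i$ a characteristic element $\sum_j\epsilon_j\alpha_j$ of square $-b_2(N_i)$, hence a Spin$^c$ structure $\frak{s}'_\epsilon$ with $c_1^2(\frak{s}'_\epsilon)=-b_2(N_i)$. Gluing these to $\frak{s}$ and applying Theorem \ref{mylemma} to each $N_i$ in turn shows that the glued structure $\tilde{\frak{s}}_\epsilon$ has nontrivial Seiberg--Witten invariant, so each
$$c_\epsilon:=c_1(\tilde{\frak{s}}_\epsilon)=c_1(\frak{s})+\sum_j\epsilon_j\alpha_j$$
is a monopole class on $\tilde{M}$; here the $\alpha_j$ range over diagonalizing bases of the negative-definite forms of the $N_i$, and there are $2^k$ such classes with $k=\sum_i b_2(N_i)$. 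Moreover, since $M$ carries a nontrivial Seiberg--Witten invariant and $b_2^+(M)>0$, the dimension formula $0\leq\tfrac14\big(c_1(\frak{s})^2-(2\chi(M)+3\tau(M))\big)$ gives $c_1(\frak{s})^2\geq 2\chi(M)+3\tau(M)$. Under the orthogonal splitting $H^2(\tilde{M},\Bbb R)\cong H^2(M,\Bbb R)\oplus\bigoplus_i H^2(N_i,\Bbb R)$ the class $c_1(\frak{s})$ lies in the first summand and the $\alpha_j$ in the negative-definite ones.

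The decisive step is an averaging argument over the sign choices. Fix a metric $g$ and write $a^+=c_1(\frak{s})^+$ and $v_j=\alpha_j^+$ for the $g$-self-dual projections, so that $c_\epsilon^+=a^++\sum_j\epsilon_j v_j$. Averaging the self-dual squares over all $\epsilon\in\{\pm1\}^k$ annihilates every cross term (the coefficients $\epsilon_j$, and $\epsilon_i\epsilon_j$ with $i\neq j$, average to $0$), leaving
$$\frac{1}{2^k}\sum_\epsilon (c_\epsilon^+)^2=|a^+|^2+\sum_j|v_j|^2\geq (c_1(\frak{s})^+)^2\geq c_1(\frak{s})^2\geq 2\chi(M)+3\tau(M),$$
where $(c_1(\frak{s})^+)^2\geq c_1(\frak{s})^2$ because $(c_1(\frak{s})^+)^2=c_1(\frak{s})^2+\|c_1(\frak{s})^-\|^2$. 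Hence some $\epsilon$ satisfies $(c_\epsilon^+)^2\geq 2\chi(M)+3\tau(M)>0$, and in particular $c_\epsilon^+\neq 0$; feeding this monopole class into LeBrun's estimate yields the lemma.

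The main obstacle is exactly the step just described. A single monopole class is too weak: from $c_\epsilon^2=c_1(\frak{s})^2-\sum_i b_2(N_i)$ one only gets $(c_\epsilon^+)^2\geq c_\epsilon^2\geq 2\chi(M)+3\tau(M)-\sum_i b_2(N_i)$, which loses precisely the term $\sum_i b_2(N_i)$ one cannot afford to lose. The sharp constant is recovered only by using the full collection of $2^k$ monopole classes together with the orthogonality of the sign patterns; the point is that these classes are genuinely available, which rests on the sign freedom in the lemma at the start of Section 2 combined with the gluing Theorem \ref{mylemma}. Checking that LeBrun's estimate applies to each of these (metric-dependent) monopole classes — that is, that the nontrivial invariants produced by the gluing formula qualify as monopole classes in the sense of his curvature bound — is the remaining point requiring care.
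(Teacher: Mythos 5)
Your proof is correct and follows essentially the same strategy as the paper: exploit the sign freedom in the characteristic element on the negative-definite pieces to produce several basic classes of $\tilde{M}$, observe that for at least one sign choice the cross term is harmless so that $((c_1(\frak{s})+E)^+)^2\geq (c_1(\frak{s})^+)^2\geq c_1^2(\frak{s})\geq 2\chi(M)+3\tau(M)$ via the dimension formula, and feed that class into LeBrun's estimate. The paper does this with just the two classes $c_1(\frak{s})\pm E$ (noting one of the two cross terms $\pm 2c_1(\frak{s})^+\cdot E^+$ is nonnegative and $(E^+)^2\geq 0$), whereas you average over all $2^k$ sign vectors; this is the same mechanism, and your explicit handling of the degenerate cases ($c_\epsilon^+=0$, $2\chi(M)+3\tau(M)\leq 0$) is a small point of extra care rather than a different route.
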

\begin{proof}
Since $c_1(\frak{s})+ E$ and $c_1(\frak{s})- E$  are basic classes of $\tilde{M}$, LeBrun's estimate \cite{LEB} gives
\begin{eqnarray} \label{bbb}
\frac{1}{4\pi^2}\int_{\tilde{M}}(\frac{s_{g}^2}{24}+2|W_+|_{g}^2)\
d\mu_{g}\geq \frac{2}{3}((c_1(\frak{s})\pm E)^{+})^{2},
\end{eqnarray}
where $(\cdot)^+$ denotes the self-dual harmonic part. On the other hand,
\begin{eqnarray*}
((c_1(\frak{s})\pm E)^{+})^{2}&=&(c_1(\frak{s})^{+}\pm E^{+})^{2} \\
&=&(c_1(\frak{s})^{+})^{2}\pm 2c_1(\frak{s})^{+}\cdot E^{+}+(E^{+})^{2} \\
&\geq& (c_1(\frak{s})^+)^{2}\pm 2c_1(\frak{s})^{+}\cdot E^{+}.
\end{eqnarray*}
Thus at least one of $((c_1(\frak{s})+E)^{+})^{2}$ and $((c_1(\frak{s})-E)^{+})^{2}$ should be greater than or equal to $(c_1(\frak{s})^+)^2$.
Say $((c_1(\frak{s})+E)^{+})^{2}\geq  (c_1(\frak{s})^+)^2$.
Then
\begin{eqnarray*}
((c_1(\frak{s})+E)^{+})^{2}&\geq& c_1^2(\frak{s}) \\
&\geq& 2\chi(M)+3\tau(M),
\end{eqnarray*}
where we used the fact that $d(\frak{s}):=\frac{1}{4}(c_1^2(\frak{s})-(2\chi(M)+3\tau(M)))$, the dimension of the Seiberg-Witten moduli space of $(M,\frak{s})$ is nonnegative.
\end{proof}

Now suppose that $g$ is an Einstein metric on $\tilde{M}$. Then the Chern-Gauss-Bonnet formula gives :
\begin{eqnarray*}
2\chi(\tilde{M})+ 3\tau(\tilde{M})&=& \frac{1}{4\pi^2}\int_{\tilde{M}}(\frac{s_{g}^2}{24}+2|W_+|_{g}^2-\frac{|\stackrel{\circ}{r}|_g^2}{2})\
d\mu_{g} \\ &=& \frac{1}{4\pi^2}\int_{\tilde{M}}(\frac{s_{g}^2}{24}+2|W_+|_{g}^2)\
d\mu_{g} \\ &\geq& \frac{2}{3}(2\chi(M)+3\tau(M)).
\end{eqnarray*}
Combined with (\ref{aaa}), it follows that
\begin{eqnarray}\label{ddd}
\frac{1}{3}(2\chi(M)+3\tau(M))\geq \sum_{i=1}^m(b_2(N_i)+4(b_1(N_i)-1)).
\end{eqnarray}

It remains to deal with the equality case in the above inequality.
Suppose the equality holds.
Then from the above we have
\begin{eqnarray}\label{eee}
((c_1(\frak{s})+E)^{+})^{2}= c_1^2(\frak{s})= 2\chi(M)+3\tau(M).
\end{eqnarray}
Suppose $\sum_{i=1}^m(b_2(N_i)+4(b_1(N_i)-1))>0,$ which implies
 $$((c_1(\frak{s})+E)^{+})^{2}>0$$ by (\ref{ddd}) and (\ref{eee}).

 From the the equality in (\ref{bbb}), LeBrun's result \cite{LEB} says that $(\tilde{M},g)$ must be almost-K\"ahler with almost-K\"ahler form a multiple of $(c_1(\frak{s})+ E)^+$ such that the basic class $c_1(\frak{s})+ E$ being the (anti)canonical class of the associated almost-complex structure, and the almost-K\"ahler form is an eigenvector of $W_+$ everywhere.

Applying Armstrong's result \cite{arm} that any closed almost-K\"ahler Einstein 4-manifold whose almost-K\"ahler form is an eigenvector of $W_+$ everywhere is K\"ahler, or Apostolov-Armstrong-Dr\u{a}ghici's result \cite{aad} that any closed  almost-K\"ahler 4-manifold which saturates (\ref{bbb}) and whose Ricci tensor is invariant under the almost-complex structure is K\"ahler, we conclude that $(\tilde{M},g)$ is K\"ahler.

Since $(\tilde{M},g)$ is K\"ahler-Einstein, we can apply the Enriques-Kodaira classification of compact complex surfaces. Since $\tilde{M}$ has a nontrivial Seiberg-Witten invariant, its Kodaira dimension is nonnegative. Then it is minimal, because it admits a K\"ahler-Einstein metric.

Now the anti-canonical class is non-torsion, because $c_1^2(\frak{s})>0$ from (\ref{eee}). Then the basic classes of such a minimal K\"ahler surface are numerically equivalent to $rc_1(K)$, where $|r|\leq 1$ is a rational number, and $K$ is the canonical line bundle. (See \cite{morgan}.) But $\pm (c_1(\frak{s})\pm E)$ are basic classes of $\tilde{M}$. This means that $E=0$, implying that $$b_2(N_i)=0\ \ \ \forall i.$$

Finally using Wu's formula \cite{wu, HH} for a closed almost-complex 4-manifold, and  (\ref{eee}),
\begin{eqnarray*}
0&=&(c_1(\frak{s})+ E)^2-(2\chi(\tilde{M})+3\tau(\tilde{M}))\\ &=&c_1(\frak{s})^2-\sum_{i=1}^m b_2(N_i)-(2\chi(M)+ 3\tau(M)-\sum_{i=1}^m(b_2(N_i)+4(b_1(N_i)-1)))\\
&=& \sum_{i=1}^m4(b_1(N_i)-1),
\end{eqnarray*}
implying that $$b_1(N_i)=1\ \ \ \forall \ i.$$
Thus $\sum_{i=1}^m(b_2(N_i)+4(b_1(N_i)-1))=0$, yielding a contradiction.

\section{Proof of Theorem \ref{th3}}
 By successively applying theorem \ref{mylemma} and \ref{newlemma}, the Seiberg-Witten invariant of $(\tilde{M},\tilde{\frak{s}})$ is nontrivial, where $\tilde{\frak{s}}$ is the Spin$^c$ structure gotten by gluing $\frak{s}$ on $M$ which has nontrivial Seiberg-Witten invariant and $\frak{s}'$ on
$(\cup_{i=1}^m N_i)\cup (\cup_{j=1}^m \bar{N}_j)$ such that $c_1^2(\frak{s}'|_{N_i})=-b_2(N_i)$ and $c_1^2(\frak{s}'|_{\bar{N}_j})=-b_2(\bar{N}_j)$ for all $i,j$.

As before, we have
\begin{eqnarray*}
\chi(\tilde{M})&=&\chi(M)+\sum_{i=1}^m\chi(N_i)+\sum_{j=1}^n(\chi(\bar{N}_j)-4)\\ &=& \chi(M)+\sum_{i=1}^m(2-2b_1(N_i)+b_2(N_i))+\sum_{j=1}^n(-2-2b_1(\bar{N}_j)+b_2(\bar{N}_j)),
\end{eqnarray*}
 and $$H_2(\tilde{M},\Bbb R)\simeq H_2(M,\Bbb R)\oplus (\oplus_{i=1}^m H_2(N_i,\Bbb R))\oplus (\oplus_{j=1}^n H_2(\bar{N}_j,\Bbb R))$$ by a simple Mayer-Vietoris argument. (Here, we use the fact that $c_i$'s are non-torsion, and $F_j$'s and $\bar{F}_j$'s are all torsion.) Thus
\begin{eqnarray*}
2\chi(\tilde{M})+ 3\tau(\tilde{M})&=& 2\chi(M)+3\tau(M) -\sum_{i=1}^m(b_2(N_i)+4(b_1(N_i)-1))\\ & & -\sum_{j=1}^n(b_2(\bar{N}_j)+4(b_1(\bar{N}_j)+1)).
\end{eqnarray*}
Now proceeding in the same way as theorem \ref{th2},
the existence of an Einstein metric on $\tilde{M}$ dictates that
$$\frac{1}{3}(2\chi(M)+3\tau(M))\geq \sum_{i=1}^m(b_2(N_i)+4(b_1(N_i)-1))+\sum_{j=1}^n(b_2(\bar{N}_j)+4(b_1(\bar{N}_j)+1)),$$
and if the equality holds, then the left hand side of the above inequality is positive, and the same argument as theorem \ref{th2} gives that $$b_2(N_i)=b_2(\tilde{N}_j)=0\ \ \ \forall i,j,$$ and
$$\sum_{i=1}^m4(b_1(N_i)-1))+\sum_{j=1}^n4(b_1(\bar{N}_j)+1)=0$$ which is a contradiction.

\section{Final Remarks}

Unlike the surgery with $N$ with $b_2^+(N)=0$, in case of a surgery with 4-manifolds with $b_2^+>0$ it is difficult to decide the existence of Einstein metric, because those surgered manifolds have no basic classes and it is very difficult to show the existence of Seiberg-Witten equations for a general metric.

Ishida and LeBrun  used the Bauer-Furuta invariant \cite{BF, bau} whose nonvanishing also guarantees the existence of Seiberg-Witten equations for any metric %refining the Seiberg-Witten invariant
to show the nonexistence of Einstein metric on some connected sums of K\"ahler surfaces.
As in \cite{IL}, let $X_j$ for $j=1,\cdots,4$ be smooth closed almost-complex 4-manifolds satisfying $$b_1(X_j)=0,\ \ b_2^+(X_j)\equiv 3\  \textrm{mod}\ 4,\ \  \sum_{j=1}^4b_2^+(X_j)\equiv 4\ \textrm{mod}\ 8,$$ and $N$ be any smooth closed oriented 4-manifold with $b_2^+(N)=0$. Suppose that all $X_i$'s have nonzero mod-2 Seiberg-Witten invariants. Then, for each $m=2,3,4$, $$(\#_{j=1}^m X_j)\# N$$ does not admit Einstein metric if $$4m-(2\chi(N)+3\tau(N))\geq \frac{1}{3}\sum^m_{j=1}c_1^2(X_j).$$

%A further exploration for other connected sums K\"ahler surfaces will be continued
Finally one can use the $G$-monopole invariant \cite{ruan} which is roughly the ``count" of $G$-invariant solutions of the Seiberg-Witten equations modulo gauge transformation to  show the nonexistence of $G$-invariant Einstein metric on some 4-manifolds with a $G$-action for a compact Lie group $G$.

\bigskip

%\noindent{\bf Acknowledgement.} The author warmly thanks.

\end{document}